\documentclass[10pt]{article} 
\usepackage{graphicx} 
\usepackage{amsmath, amssymb, amsthm}
\usepackage{mathrsfs}
\usepackage{latexsym, multicol, color, epsfig, multirow, url, amsbsy}
\usepackage{soul}
\usepackage[authoryear,longnamesfirst]{natbib}
\usepackage{setspace} 
\usepackage{algorithm}
 \usepackage{algpseudocode}
\usepackage{fullpage}
\usepackage[colorlinks=true, allcolors=blue]{hyperref} 

\newtheorem{theorem}{Theorem}
\newtheorem{lemma}[theorem]{Lemma}
\newtheorem{proposition}[theorem]{Proposition}
\newtheorem{definition}[theorem]{Definition}
\newtheorem{remark}[theorem]{Remark}

\usepackage[normalem]{ulem}

\title{Heavy Tailed Homogeneous Structural Causal Models}
\author{
Vishal Routh\\
v.routh@uga.edu\\
University of Georgia
\and
Shuyang Bai\\
bsy9142@uga.edu\\
University of Georgia
}
\date{\today}

\begin{document}

\maketitle

\begin{abstract}
We consider causal discovery in structural causal models driven by heavy-tailed noise, where extremes carry important information about causal direction. We introduce the Heavy-Tailed Homogeneous Structural Causal Model (HT-HSCM), a unified framework that generalizes heavy-tailed linear and max-linear models. We demonstrate that causal tail coefficients identify the complete ancestral partial order of the underlying directed acyclic graph. We also formulate a recursive algorithm for recovering quantities associated with the model called ancestral impulse-responses  from the causal tail coefficients. Our results provide a general and theoretically justified framework for causal discovery in heavy-tailed systems.
\end{abstract}
\noindent
\textbf{Keywords:} causal discovery, heavy tails, structural causal models,  causal tail coefficient, single big jump principle.

\section{Introduction}

Causal discovery---the process of learning the underlying causal structure from purely observational data---remains a fundamental challenge in modern statistics and machine learning. Traditionally, structural causal models (SCMs) have often relied heavily on assumptions  of finite-variance noise; for example, see \cite{chaudhuri2025consistent} and the references therein.  However, empirical data across various complex systems frequently exhibit heavy-tailed behavior, where extreme events play a disproportionate and highly informative role in the system's dynamics. Such considerations arise in fields including finance \cite{chuang2009causality}, Earth and environmental sciences \cite{sun2021causal}, \cite{mhalla2020causal}, public health \cite{chernozhukov2011inference}, genetics \cite{duncan2011genome}, and neuroscience \cite{zanin2016causality}, among others.

When the noise variables driving an SCM are heavy-tailed, the asymmetry in how extreme values propagate through the graph presents a unique opportunity. Recent literature has demonstrated that extreme value theory, particularly the framework of regular variation, can be leveraged to resolve causal directions. Existing works have successfully explored specific instances of this phenomenon---such as heavy-tailed linear SCMs (see \cite{gnecco2021causal}, \cite{pasche2023causal}, \cite{krali2025causal}, \cite{jiang2025separation}) and max-linear models (see \cite{kluppelberg2021estimating}, \cite{buck2021recursive}, \cite{gissibl2018tail}, \cite{tran2024estimating}, \cite{amendola2022conditional},  \cite{adams2025inference}). \cite{chavez2024causality} summarizes these developments in the review. However, there remains a critical need for a generalized framework capable of accommodating diverse, nonlinear functional relationships. We also mention the recent works of \cite{fang2025structural} and \cite{engelke2025extremes}, which formulate SCMs  at the level of the extremal limit.

In this paper, 
 
we introduce the \emph{Heavy-Tailed Homogeneous Structural Causal Model (HT-HSCM)}. We assume that the local structural equations to be non-negative , continuous, and 1-homogeneous structural functions, the HT-HSCM serves as a unifying framework that   encompasses   linear models,   max-linear models, and $\ell_p$-norm aggregation models under a single theoretical umbrella.

We show that the Causal Tail Coefficient (CTC)  originally introduced  in \cite{gnecco2021causal} for linear models continues to capture the asymmetric tail dependence between variables and inherently encodes the causal topology of the network for  HT-HSCMs.
Then, we formulate at the population level a recursive algorithm that leverages  the CTCs   to  recover certain   characteristics of the HT-HSCM that we term \emph{Ancestral Impulse-Responses (AIRs)}. Together, these results provide a robust, theoretically grounded pathway for performing causal discovery in   heavy-tailed environments.

\section{Preliminaries}

In this section, we prepare some   graph-theoretic notions as well as some  background on regular variation, the mathematical framework for describing heavy tails.

\subsection{Graph  notions}
Let $\mathcal{D}=(V,E)$ be a directed acyclic graph (DAG) with node set $V=\{1, \cdots, d\}$ and edge set $E \subset V \times V$. 
Node $j$ is a parent of node $i$ if the edge $(j,i) \in E$. We define the parent set of node $i$ as $pa(i)=\{j \in V: (j,i) \in E \}$.
We also write $j \to i$ if  $(j,i)\in E$. 
For an SCM defined on a DAG, each node in \(V\) corresponds to a random variable, and a directed edge indicates that the variable at the child node functionally depends on the variable at the parent node.

By a directed path from node $j$ to node $i$,  we mean a sequence of nodes $ (k_0 ,\cdots, k_s)$, $s\in \mathbb{Z}_+$,  such that $k_l \to k_{l+1}$, $ l=0,\cdots (s-1)$, and $k_0=j$, $k_s=i$.  If there exists a directed path from node $j$ to node $i$, we write $j \leadsto i$. If $j \leadsto i$, we say that $j$ is an ancestor of   $i$ and $i$ is a descendant of $j$. We denote the set of ancestors of node $i$ with $an(i)$ and $An(i)= an(i) \cup \{i\}$. Similarly we define the descendants of node $j$ with $de(j)$ and $De(j) = de(j) \cup \{j\}$.

 While the graph theoretic notations above provide the combinatorial structure of our causal models, the statistical identifiability of these structures relies on the specific tail behavior of the associated   random variables. In particular, to resolve causal directions from observational data, we must characterize how extreme values propagate through the network. This necessitates a formal treatment of regular variation, which provides the mathematical language to describe the heavy-tailed noise distributions that drive our proposed HT-HSCM. We detail these probabilistic preliminaries in the following subsection.

\subsection{Regularly varying functions and random variables}
Recall a positive measurable function $f$  is said to be \emph{regularly varying} (at $\infty$)  with index 
$\alpha \in \mathbb{R}$, denoted $f \in RV_\alpha$, if it is defined on some neighborhood of 
infinity $[x_0, \infty)$ for some $x_0 > 0$, and satisfies
\[
\lim_{x \to \infty} \frac{f(cx)}{f(x)} = c^{\alpha}, \quad \forall\, c > 0.
\]
The special case $\alpha = 0$ corresponds to \emph{slow variation}, written $f \in RV_0$.

A random variable $X$ is called \emph{regularly varying} with index $\alpha$, denoted as $RV_+(\alpha)$ if its tail distribution satisfies
\[
\mathbb{P}(X > x) \sim \ell(x) x^{-\alpha}, 
\] as $x \to \infty$.
for some slowly varying function $\ell \in RV_0$.  For two functions $f$ and $g$, the notation 
$f \sim g$ means that $f(x)/g(x) \to 1$ as $x \to \infty$.  
Classical examples of regularly varying random variables include those with Student’s-$t$, Pareto, Cauchy, and Fr\'echet distributions.
A key property of regularly varying random variables is the so-called \textit{single big jump principle}, which informally states that when an extreme event occurs, it is most likely driven by a single variable taking an exceptionally large value, while the remaining variables stay comparatively small. See the Supplementary Materials for a more detailed discussion.

\section{Heavy Tailed Homogeneous Structural Causal Models}

\subsection{Definitions}
Now we present the \emph{Heavy-Tailed Homogeneous Structural Causal Models} (HT-HSCMs), which encompass the max-linear and sum-linear heavy-tailed structural causal models previously considered in the literature.

\begin{definition}\label{Def:HT-HSCM}

 A HT-HSCM $\boldsymbol{X}=\{X_1,\cdots,X_d\}$, $d \in \mathbb{Z}_+$ on a DAG $\mathcal{D}=(V,E)$ with nodes $V=\{1,\cdots,d\}$ and edge set $E=\{(j,i): i \in V \text{ and } j \in pa(i)\}$ is a set of d assignments satisfying:
\begin{equation}
    X_i= f_{pa(i)}(\boldsymbol{X}_{pa(i)}, \epsilon_i), \quad i=1,\cdots,d
\end{equation}
where:
\begin{itemize}
    \item $\boldsymbol{X}_{pa(i)}=\{X_j: j \in pa(i)\}$ denotes the vector of parent variables of $X_i, \quad i=1,\cdots,d$. 
    \item $\varepsilon_1, \ldots, \varepsilon_d$ are i.i.d.\ regularly varying  random variables   with tail index $\alpha$, namely,
    \begin{equation}\label{eq:RV tail}
          \mathbb{P}(\varepsilon_i > x) \sim l(x)x^{-\alpha}, 
        \quad x \to \infty,
    \end{equation}
    for some slowly varying function $\ell$. 
    \item Each   structural function $f_{{pa}(i)} : \mathbb{R}_+^{|{pa}(i)|+1} \to \mathbb{R}_+$ satisfies:
    \begin{enumerate}
        \item \textbf{Non-negativity:} $f_{{pa}(i)}(x) \geq 0$ for all $x \in \mathbb{R}_+^{|{pa}(i)|+1}$.
        \item \textbf{Vanishing only at the origin:} $f_{{pa}(i)}(x) = 0 \iff x = 0$.
        \item \textbf{Continuity:} $f_{{pa}(i)}$ is continuous.
        \item \textbf{Homogeneity of degree 1:} for any $c > 0$ and $x \in \mathbb{R}_+^{|{pa}(i)|+1}$,
        \begin{equation}
            f_{{pa}(i)}(c x) = c f_{{pa}(i)}(x).
        \end{equation}
        \item \textbf{Coordinate deletion monotonicity}: Let $J \subseteq \{1, \dots, |pa(i)|+1\}$ be any nonempty subset of indices. The function $f_{pa(i)}$ satisfies:
\begin{equation}
    f_{pa(i)}(x_1, \dots, x_{|pa(i)|+1}) \ge f_{pa(i)}\left( \sum_{j \in J} x_j \mathbf{e}_j \right),
\end{equation}
where $\mathbf{e}_j$ is the $j$th standard orthonormal basis of $\mathbb{R}^{|{pa}(i)|+1}$.

    \end{enumerate}
\item We assume in addition that the marginal distribution of each $X_i$, $i=1,\ldots,d$, is continuous.
\end{itemize}
\end{definition}

\begin{remark}
Since $f_{pa(i)}$ is   homogeneous, when considering the specific case of Property 5 above where $x_j = c$ for all $j \in J$, $c\in (0,\infty)$, we obtain:
\begin{equation}\label{eq:monotone imp}
    f_{pa(i)}(\mathbf{x}) \ge f_{pa(i)}(c \mathbf{1}_J) = c f_{pa(i)}(\mathbf{1}_J),
\end{equation}
where $\mathbf{1}_J = \sum_{j \in J} \mathbf{e}_j$ is the indicator vector containing $1$s for all indices in $J$ and $0$s otherwise.
\end{remark}

By recursively substituting the local structural equations along the ancestral relations, each component $X_i$ of $\boldsymbol{X}$ can be expressed as a function $F_{{An}(i)}$ of its ancestral noise variables:
\begin{equation}\label{eq:anc agg}
    X_i = F_{{An}(i)} (\boldsymbol{\varepsilon}_{An(i)}), \quad i=1,\dots,d,
\end{equation}
where $\boldsymbol{\epsilon}_{An(i)}= (\epsilon_h)_{h \in An(i)}$ . It is straightforward to verify that each function $F_{{An}(i)} \colon \mathbb{R}_+^{|An(i)|} \to \mathbb{R}_+$, for $i=1, \dots, d$, inherits properties 1 through 5 from the local structural functions $f_{{pa}(i)}$.

\begin{definition}\label{Def:AIR}
    We define the Ancestral Impulse-Response  (AIR) matrix  $F \in \mathbb{R}^{d \times d}$ by  
\[
    F_{hi} = \begin{cases} 
        F_{{An}(i)}\!\left( (\mathbf{1}_{\{j=h\}})_{j \in {An}(i)} \right) & \text{if } h \in {An}(i), \\
        0 & \text{otherwise.}
    \end{cases}
\]
\end{definition}
If $h\in  An(i)$, the entry $F_{hi}$ may be interpreted as the influence of a unit impulse at node $i$ originating from its ancestor $h$. The significance of these quantities can be understood through the single big jump principle: $F_{hi}$ characterizes the effect of a single unit jump from the noise variable associated with ancestor $h$, while all other ancestral noise variables remain inactive. Due to the homogeneity of $F_{{An}(i)}$, it suffices to consider  a unit impulse.
 Indeed, the sequence $\bigl(\mathbf{1}_{\{j=h\}}\bigr)_{j\in{An}(i)}$ is the canonical basis vector corresponding to ancestor $h$.

 Consequently, $F_{hi}$ measures the marginal effect of $h$ on $i$ induced by $F_{{An}(i)}$. 
 Note that if $h \notin {An}(i)$, the input is the zero vector, which evaluates to exactly zero, ensuring that the matrix $F$ strictly respects the underlying ancestral structure. Thus, $F$ can be viewed as a structural influence matrix, with each column $i$ encoding how every other node individually contributes to node $i$.
For later use, it will be convenient to introduce the following variant of the AIR matrix $\widetilde{F} $, obtained by standardizing each column of the orignal AIR matrix $F$ with respect to the $\alpha$-norm.
\begin{definition}\label{Def:st AIR}
 We define the \emph{standardized AIR matrix} $\widetilde{F} \in \mathbb{R}^{d \times d}$ by  
\[
    \widetilde{F}_{hi} = \frac{F_{hi}}{\left(\sum_{k \in {An}(i)} F_{ki}^{\alpha}\right)^{1/\alpha}},
\]
where $F_{hi}$'s are as in Definition \ref{Def:AIR}. 
\end{definition}

\subsection{Examples}

In this section, we present several concrete examples of SCMs that fall within the class of HT-HSCMs introduced in Definition~\ref{Def:HT-HSCM}. Throughout, we assume that $\varepsilon_1,\ldots,\varepsilon_d$ are i.i.d.\ non-negative regularly varying random variables satisfying \eqref{eq:RV tail}.

\begin{enumerate}

\item \textbf{Linear SCM}.
Consider the structural equation
\[
    X_i = \sum_{h \in {pa}(i)} c_{hi} X_h + \varepsilon_i,
\]
where the structural function is given by
\[
    f_{{pa}(i)}(x_{{pa}(i)}, z) := \sum_{h \in {pa}(i)} c_{hi} x_h + z,
\]
 and the coefficients $c_{hi} \ge 0$.
 By recursively composing the structural functions along the DAG, we obtain an induced ancestral aggregation map $F_{{An}(i)}$. 
 In the linear SCM, this recursive substitution expands additively over the network topology. 
Let $\mathcal{P}_{hi}$ denote the set of all directed paths from node $h$ to node $i$. Then the AIRs in Definition \ref{Def:AIR} are given by
\begin{equation}\label{eq:linear SCM AIR}
    F_{hi} = 
    \begin{cases} 
        \sum_{\pi \in \mathcal{P}_{hi}} \prod_{(u, v) \in \pi} c_{uv}, & h \in {an}(i), \\[4pt]
        1, & h = i, \\[4pt]
        0, & h \notin {An}(i).
    \end{cases}
\end{equation}

Here, the product $\prod_{(u,v)\in \pi}$ is taken over all directed edges along the path $\pi$, where with a slight abuse of notation, we identify the path $\pi$ with the set of its directed edges.

 So that the matrix $F$ encodes the total marginal influence of each ancestor by summing the accumulated weights across all possible mediating paths.

\item \textbf{Max-Linear   Model}.
Consider the structural equation
\[
    X_i = \bigvee_{h \in {pa}(i)} (c_{hi} X_h) \vee \varepsilon_i,
\]
where the structural function is given by  
\[
    f_{{pa}(i)}(x_{{pa}(i)}, z) := \bigvee_{h \in {pa}(i)} (c_{hi} x_h) \vee z,
\]
and the coefficients $c_{hi} \ge 0$.
The AIRs in Definition \ref{Def:AIR} are given by
\[
    F_{hi} = \begin{cases} 
        \bigvee_{\pi \in \mathcal{P}_{hi}} \prod_{(u , v) \in \pi} c_{uv}, & h \in {an}(i), \\[4pt]
        1, & h = i, \\[4pt]
        0, & h \notin {An}(i). 
    \end{cases}
\]
 so that the matrix $F$ encodes the dominant marginal influence of each ancestor, isolating the single most heavily weighted path transmitting the extreme noise from $h$ to $i$.

\item \textbf{$\ell_p$  Model.}
Consider the structural equation
\begin{equation}
    X_i
    \;=\;
    f_{{pa}(i)}\!\Bigl( \bigl( (c_{hi} X_h)_{h\in{pa}(i)},\, \varepsilon_i \bigr) \Bigr),
     \qquad i=1,\cdots,d.
    \label{eq:htlp}
\end{equation}
where the structural function is given by the $p$-norm
\[
f_{{pa}(i)}(x_{{pa}(i)}, z)
\;:=\;
\Bigl\| \bigl( (c_{hi} x_h)_{h\in{pa}(i)},\, z \bigr) \Bigr\|_p, \qquad p\in(0,\infty)
\]
the coefficients $c_{hi}\ge 0$. Although the function $F_{{An}(i)}$ in \eqref{eq:anc agg} differs from that in the linear SCM, it can be verified that its AIRs coincide with those of the linear SCM given in \eqref{eq:linear SCM AIR}.

 By recursively composing the structural functions along the DAG, we obtain an induced ancestral aggregation map $F_{{An}(i)}$. 
 In the $\ell_p$-aggregation model~\eqref{eq:htlp},Letting $\mathcal{P}_{hi}$ denote the set of all directed paths from node $h$ to node $i$, this definition yields
 The AIRs in Definition \ref{Def:AIR} are given by
 \[
 F_{hi}
 =
 \begin{cases}
 \sum_{\pi \in \mathcal{P}_{hi}} \prod_{(u,v) \in \pi} c_{uv}, & h \in {an}(i), \\[4pt]
         1, & h = i, \\[4pt]
         0, & h \notin {An}(i),
 \end{cases}
 \]
which agrees with formula \eqref{eq:linear SCM AIR} in the linear case, once the coefficients \(c_{hi}\) in the two formulations are identified.
 % so that, despite the nonlinear aggregation at the parent level, the matrix $F$ continues to encode the marginal influence of each ancestor and respects the sparsity structure of the underlying DAG.

\end{enumerate}

We mention that more examples can be constructed by mixing different types of the  structural equations mentioned above at different nodes.

\section{Causal tail coefficient}
For any node $i \in V$, let $G_i$ denote the cumulative distribution function of random variable $X_i$.

\begin{definition}\label{Def:CTC}
    For any two random variables $X_i$ and $X_j$ in an HT-HSCM over $d$ variables as described in Definition \ref{Def:HT-HSCM}, we define the standardized Causal Tail Coefficient (CTC)  as:
    \begin{equation}\label{eq:st CTC}
        \Gamma^*_{ji} = \lim_{x \to \infty} \mathbb{E}[2G_i(X_i) - 1 \mid X_j > x],
    \end{equation}
    where $i, j \in \{1, \dots, d\}$ (the existence of these limits will be justified in Lemma \ref{Lem:CTC expr} below).   We also define  the standardized CTC  matrix $\boldsymbol{\Gamma}^* \in \mathbb{R}^{d \times d}$ as the matrix whose $(i,j)$-th entry is given by:
    \[
        (\boldsymbol{\Gamma}^*)_{ij} = \Gamma^*_{ij} \quad \text{for } i,j = 1, \dots, d.
    \]
\end{definition}

The standardized CTC  in \eqref{eq:st CTC} is simply an affine transformation of the CTC 
\[
\Gamma_{ji}= \lim_{x \to \infty} \mathbb{E}\!\left[ G_i(X_i) \mid X_j > x \right]= \lim_{x \to \infty} \mathbb{E}\!\left[ G_i(X_i) \mid G_j (X_j) > x \right]
\]
introduced in \cite{gnecco2021causal}. Note that, whenever it exists, the standardized CTC takes values in the interval $[0,1]$, in contrast to the range $[1/2,1]$ of the original CTC. 
Its introduction simplifies the presentation of the subsequent results.

The following lemma establishes the existence of the limits in \eqref{eq:st CTC} and relates them to the standardized AIRs defined in Definition~\ref{Def:st AIR}.

\begin{lemma} \label{Lem:CTC expr}
    For any two distinct nodes $i, j \in V$, we have
    \begin{equation}
        \Gamma^*_{ji} = \sum_{h \in {An}(i) \cap {An}(j)} \widetilde{F}_{hj}^\alpha
        \end{equation}
\end{lemma}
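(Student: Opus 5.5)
Conditioning on an extreme of $X_j$, the single big jump principle says the extreme is caused by exactly one ancestral noise $\varepsilon_h$, $h\in An(j)$. We compute the probability of each such ``jump source'' and then check whether $X_i$ is also extreme in that scenario. Throughout, the noises are nonnegative, and the structural maps $F_{An(\cdot)}$ inherit properties 1--5 and are continuous and 1-homogeneous.

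\emph{Step 1: tail of $X_j$.} We first show that
\[
\mathbb{P}(X_j>x)\sim \Bigl(\sum_{k\in An(j)}F_{kj}^\alpha\Bigr)\,\mathbb{P}(\varepsilon_1>x),
\]
and more precisely that
\[
\mathbb{P}\bigl(X_j>x,\ \varepsilon_h \text{ is the unique large noise}\bigr)\sim F_{hj}^\alpha\,\mathbb{P}(\varepsilon_1>x).
\]
We use multivariate regular variation of the i.i.d.\ vector $\boldsymbol\varepsilon_{An(j)}$: its exponent measure concentrates on the coordinate axes. Since $F_{An(j)}$ is continuous and 1-homogeneous, the continuous mapping theorem for regular variation (Hult--Lindskog style) gives
\[
\frac{\mathbb{P}(F_{An(j)}(\boldsymbol\varepsilon)>x)}{\mathbb{P}(\varepsilon_1>x)}\to\mu\{z: F_{An(j)}(z)>1\}.
\]
On the axis through $e_h$ we have $F(te_h)=tF_{hj}$, so the axis contributes $F_{hj}^\alpha$. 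Dividing by the total yields
\[
\mathbb{P}(\text{jump at }h\mid X_j>x)\to \widetilde F_{hj}^\alpha.
\]
Two technical points must be checked. One is that $\{F>1\}$ is a $\mu$-continuity set bounded away from the origin; this uses continuity together with $F(z)=0$ iff $z=0$, hence $\inf_{\|z\|=1}F>0$. The other is that joint events with two large noises are negligible.

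\emph{Step 2: behaviour of $G_i(X_i)$ given each scenario.} Given a jump at $h\in An(i)\cap An(j)$, coordinate deletion monotonicity gives $X_i\ge F_{An(i)}(\varepsilon_h e_h)=F_{hi}\varepsilon_h$, with $F_{hi}>0$ by property 2. Since $\varepsilon_h\to\infty$ in probability under the conditioning, $G_i(X_i)\to1$ in probability, and so $2G_i(X_i)-1\to1$.

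If instead $h\in An(j)\setminus An(i)$, then $\varepsilon_h$ is independent of $\boldsymbol\varepsilon_{An(i)}$. Under the single jump event the remaining noises are asymptotically unaffected, so $X_i$ given the scenario converges in law to its unconditional distribution. Because $G_i$ is continuous (the marginal continuity assumption), $G_i(X_i)$ is then asymptotically uniform and $\mathbb{E}[2G_i(X_i)-1]\to0$. To make this rigorous, I would condition on $\{\varepsilon_h>$ large$\}$ intersected with the event that the other noises in $An(j)$ stay $o(x)$. I would then use independence of $\varepsilon_h$ from $\boldsymbol\varepsilon_{An(i)}$, plus the fact that, under $\varepsilon_h$ large, the conditional law of $\boldsymbol\varepsilon_{An(i)}$ differs from the unconditional law only on an event of vanishing probability (by Step 1's negligibility).

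\emph{Step 3: assemble.} Since $0\le G_i\le1$, bounded convergence combined with the scenario decomposition gives
\[
\Gamma^*_{ji}=\sum_{h\in An(i)\cap An(j)}\widetilde F_{hj}^\alpha\cdot 1+\sum_{h\in An(j)\setminus An(i)}\widetilde F_{hj}^\alpha\cdot 0 .
\]

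The main obstacle is the second case of Step 2 together with the negligibility claim in Step 1. Conditioning on $X_j>x$ does tilt the law of the small noises in $An(i)\cap An(j)$ slightly. I expect to handle this by a truncation: split on the events $\{\varepsilon_k\le \delta x \text{ for all } k\neq h\}$, show that the complementary multi-jump events have probability $o(\mathbb{P}(\varepsilon_1>x))$, and then let $\delta\downarrow0$. The $\delta$-dependence is controlled by continuity of $F$ on the unit sphere.
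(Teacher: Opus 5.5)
Your plan is correct in outline, but it organizes the single-big-jump decomposition differently from the paper. That difference is exactly what creates the obstacle you flag.

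\textbf{The paper's route.} The paper never conditions on scenario events like $\{X_j>x\}\cap\{\varepsilon_h>\delta x,\ \varepsilon_k\le\delta x\ (k\neq h)\}$. Instead, coordinate-deletion monotonicity gives $\{\bigvee_{h\in An(j)}F_{hj}\varepsilon_h>x\}\subseteq\{X_j>x\}$. The tail lemma (your Step~1 for $X_j$, plus inclusion--exclusion for the max) shows both events have probability $\sim\sum_{k\in An(j)}F_{kj}^\alpha\,\mathbb{P}(\varepsilon_1>x)$. Their difference is therefore $o(\mathbb{P}(\varepsilon_1>x))$ and can be discarded, since $0\le G_i\le1$. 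Inclusion--exclusion then leaves only the single-noise events $\{F_{hj}\varepsilon_h>x\}$.
\begin{itemize}
\item For $h\notin An(i)$, such an event is \emph{exactly} independent of $X_i$, so it contributes exactly $\tfrac12\mathbb{P}(F_{hj}\varepsilon_h>x)$.
\item For $h\in An(i)\cap An(j)$, one has $G_i(F_{hi}x/F_{hj})\,\mathbb{P}(F_{hj}\varepsilon_h>x)\le\mathbb{E}[G_i(X_i)\mathbf{1}\{F_{hj}\varepsilon_h>x\}]\le\mathbb{P}(F_{hj}\varepsilon_h>x)$.
\end{itemize}
So no tilting of the small noises, no $\delta$-truncation and no double limit ever appear.

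\textbf{Closing your route.} Your approach still works, but the step you only sketch needs a two-sided sandwich on $E_h^\delta=\{\varepsilon_h>\delta x,\ \varepsilon_k\le\delta x\ (k\neq h)\}$:
\[
\{F_{hj}\varepsilon_h>x\}\cap E_h^\delta\subseteq\{X_j>x\}\cap E_h^\delta\subseteq\{F_{hj}\varepsilon_h>(1-\eta_\delta)x\}\cap E_h^\delta,
\]
with $\eta_\delta\to0$. The upper inclusion comes from uniform continuity of $F_{An(j)}$ on the unit sphere. For $h\notin An(i)$ and small $\delta$, independence then factorizes the bounds as $\mathbb{E}[G_i(X_i)\mathbf{1}\{\varepsilon_k\le\delta x,\ k\neq h\}]\,\mathbb{P}(F_{hj}\varepsilon_h>cx)$ with $c\in\{1,1-\eta_\delta\}$. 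The first factor tends to $\tfrac12$, and you let $x\to\infty$ and then $\delta\downarrow0$.

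\textbf{Trade-off.} Your version gives the explicit conditional picture $\mathbb{P}(\text{jump at }h\mid X_j>x)\to\widetilde{F}_{hj}^\alpha$, and its sandwich leans on continuity rather than monotonicity. The paper's version is much shorter, and continuity enters only through the tail of $X_j$.

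\textbf{A small correction.} $\{F>1\}$ is bounded away from the origin because $\sup_{\|z\|=1}F<\infty$ (continuity on a compact set), not because $\inf_{\|z\|=1}F>0$. It is a $\mu$-continuity set because $\partial\{F>1\}\subseteq\{F=1\}$ meets each coordinate axis in at most one point.
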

See \textit{Supplementary Materials} for a proof.\\
The following theorem demonstrates that the causal tail coefficient,  which is observable from the bivariate distribution of $X_1$ and $X_2$, fully encodes  the causal relationship between the two variables.

\begin{theorem}\label{Thm:CTC}
    Consider a HT-HSCM over $d$ variables, including $X_1$ and $X_2$, as defined in Section 1. Then, knowledge of $\Gamma^*_{12}$ and $\Gamma^*_{21}$ allows us to distinguish the following cases:
    \begin{enumerate}
        \item[(a)] $X_1$ causes $X_2$,
        \item[(b)] $X_2$ causes $X_1$,
        \item[(c)] There is no causal link between $X_1$ and $X_2$ (i.e., $An(1) \cap An(2) = \emptyset$),
        \item[(d)] There is a node $j \notin \{1, 2\}$ such that $X_j$ is a common cause of $X_1$ and $X_2$, and neither $X_1$ causes $X_2$ nor $X_2$ causes $X_1$.
    \end{enumerate}
\end{theorem}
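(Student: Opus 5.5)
We rely entirely on Lemma~\ref{Lem:CTC expr}, which gives
\[
\Gamma^*_{12} = \sum_{h \in An(1)\cap An(2)} \widetilde F_{h2}^\alpha,\qquad
\Gamma^*_{21} = \sum_{h \in An(1)\cap An(2)} \widetilde F_{h1}^\alpha ,
\]
where the normalization in Definition~\ref{Def:st AIR} gives $\sum_{h\in An(i)} \widetilde F_{hi}^\alpha = 1$ for each $i$. The plan is to show that the four cases are characterized by the following pattern of values:
\begin{itemize}
\item[(a)] $\Gamma^*_{12}=1$ and $\Gamma^*_{21}<1$;
\item[(b)] $\Gamma^*_{21}=1$ and $\Gamma^*_{12}<1$;
\item[(c)] $\Gamma^*_{12}=\Gamma^*_{21}=0$;
\item[(d)] $0<\Gamma^*_{12}<1$ and $0<\Gamma^*_{21}<1$.
\end{itemize}
Since these patterns are mutually exclusive and the four cases exhaust all configurations of a DAG (acyclicity rules out $1\leadsto 2$ together with $2\leadsto 1$), this suffices.

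The first step is to show that $\widetilde F_{hi}>0$ for every $h\in An(i)$. I would argue this from property 5 (coordinate deletion monotonicity) combined with property 2 for the local functions, applied inductively along a directed path $h=k_0\to\cdots\to k_s=i$. The unit impulse at $h$ gives $X_h = f_{pa(h)}(0,\dots,0,1)>0$, by property 2 applied to a nonzero input. At each subsequent node $k_{l+1}$, the input vector has a positive coordinate at parent $k_l$. Deleting all other coordinates via property 5 gives $X_{k_{l+1}} \ge X_{k_l} f_{pa(k_{l+1})}(\mathbf e_{k_l}) > 0$, using homogeneity and property 2. Hence $F_{hi}>0$. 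It is also worth checking that $F_{ii}>0$ and that $F_{hi}<\infty$, which is clear. Consequently every $\widetilde F_{hi}$ with $h\in An(i)$ is strictly positive, and each $\Gamma^*$ is a partial sum of a collection of positive terms totaling $1$.

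The case analysis then follows.
\begin{itemize}
\item[(a)] If $1\leadsto 2$, then $An(1)\subseteq An(2)$, so $An(1)\cap An(2)=An(1)$ and $\Gamma^*_{21}=\sum_{h\in An(1)}\widetilde F_{h1}^\alpha=1$. Meanwhile $2\in An(2)\setminus An(1)$ carries a positive weight, so $\Gamma^*_{12}<1$. (This fixes the index convention; the pattern is the one listed for (a) or (b) above with the roles swapped according to the paper's convention $\Gamma^*_{ji}$ conditioning on $X_j$.)
\item[(b)] This case is symmetric to (a).
\item[(c)] Here the intersection is empty, so both coefficients are $0$.
\item[(d)] Here the intersection is nonempty, so both coefficients are positive. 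Neither node is an ancestor of the other, so $1\notin An(2)$ and $2\notin An(1)$. Hence the intersection misses $1$ and $2$, and both sums omit a positive term, giving both coefficients strictly less than $1$.
\end{itemize}

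The main obstacle is the positivity step, namely making sure the unit impulse propagates strictly positively through the nonlinear maps. This requires a careful reading of property 5 with $J$ being a single parent coordinate, because property 2 alone does not prevent a nonzero input from producing a small output. The rest is bookkeeping on the index convention of $\Gamma^*_{ji}$.
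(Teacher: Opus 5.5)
Your route is the paper's route: the paper proves the theorem by feeding Lemma~\ref{Lem:CTC expr} into the case analysis of Theorem~1 in \cite{gnecco2021causal}. Your four-case analysis, together with strict positivity of $\widetilde F_{hi}$ for $h\in An(i)$, is that argument written out.

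There is, however, one concrete error: you have quoted Lemma~\ref{Lem:CTC expr} with the indices swapped. The lemma says $\Gamma^*_{ji}=\sum_{h\in An(i)\cap An(j)}\widetilde F_{hj}^\alpha$. The normalization is by the column of the \emph{conditioning} variable $X_j$, so $\Gamma^*_{12}=\sum_{h\in An(1)\cap An(2)}\widetilde F_{h1}^\alpha$ and $\Gamma^*_{21}=\sum_{h\in An(1)\cap An(2)}\widetilde F_{h2}^\alpha$, the reverse of what you wrote.

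With your formulas, the case (a) computation outputs $\Gamma^*_{21}=1$ and $\Gamma^*_{12}<1$. This contradicts the pattern you announced for (a) and Table~\ref{tab:causal_cases}; your parenthetical about roles being swapped flags the inconsistency rather than resolving it. The map from cases to patterns is injective under either labeling, so bare distinguishability survives. But the orientation is exactly what is used afterwards (Lemma~\ref{lem:ancestor_set_gamma} reads off $\Gamma^*_{ij}=1\iff i\in an(j)$), so it has to be right.

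With the correct formulas everything lines up. If $1\leadsto 2$ then $An(1)\subseteq An(2)$, so $\Gamma^*_{12}=\sum_{h\in An(1)}\widetilde F_{h1}^\alpha=1$. Meanwhile $\Gamma^*_{21}=\sum_{h\in An(1)}\widetilde F_{h2}^\alpha$ omits $h=2$ and contains $h=1$, hence lies in $(0,1)$. As a sanity check: if $X_1$ is extreme and causes $X_2$, then $X_2$ is extreme too, so $\mathbb{E}[G_2(X_2)\mid X_1>x]\to 1$.

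Separately, the positivity step is not the obstacle you describe. Nonnegativity plus property 2 already give $f_{pa(k)}(\mathbf{x})>0$ for every nonzero $\mathbf{x}\in\mathbb{R}_+^{|pa(k)|+1}$. Strict positivity, not a quantitative lower bound, is all the partial-sum argument needs, so property 5 is superfluous here. More directly still, $F_{An(i)}$ inherits property 2, so $F_{hi}=F_{An(i)}(\mathbf{e}_h)>0$ for every $h\in An(i)$ at once. Your path-propagation argument is correct, just longer than necessary.
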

The corresponding values for $\Gamma_{12}^*$ and $\Gamma_{21}^*$ are depicted in Table \ref{tab:causal_cases}.

\begin{table}[htpb]
    \centering
    \renewcommand{\arraystretch}{1.5}
    \caption{Summary of the possible values of $\Gamma^*_{12}$ and $\Gamma_{21}$ and the implications for causality.}
    \label{tab:causal_cases}
    \begin{tabular}{lccc}
        \hline
        & $\Gamma^*_{21} = 1$ & $\Gamma^*_{21} \in (0, 1)$ & $\Gamma^*_{21} = 0$ \\
        \hline
        $\Gamma^*_{12} = 1$          & --- & (a) $X_1$ causes $X_2$ & --- \\
        $\Gamma^*_{12} \in (0, 1)$ & (b) $X_2$ causes $X_1$ & (d) Common cause       & --- \\
        $\Gamma^*_{12} = 0$        & --- & --- & (c) No causal link \\
        \hline
    \end{tabular}
\end{table}

\begin{proof}
The proof of this theorem is based on Lemma~\ref{Lem:CTC expr} and closely follows that of Theorem~1 in \cite{gnecco2021causal}. We therefore omit the details.
\end{proof}

\begin{remark}
    
Theorem~\ref{Thm:CTC} can be interpreted at a more structural level. 
Indeed, cases~(a) and~(b), together with their complement, determine for any pair $(i,j)$ whether $i \in an(j)$, $j \in an(i)$, or neither holds. Hence the theorem identifies the partial order on the node set induced by the ancestor relation of the DAG. As a consequence, we can define a topological layering of nodes based  called generations (see \cite{kluppelberg2021estimating} Definition 1 for a discussion on generation of nodes and Lemma 1 in \cite{zhou2024efficient} on how to utilize the causal tail coefficients to find the generation of nodes). We may also apply the \textit{EASE} algorithm as in \cite{gnecco2021causal} to obtain a causal order of the nodes.

However, the content of Theorem~\ref{Thm:CTC} is strictly stronger than recovery of the partial order alone. Indeed, among pairs $(i,j)$ for which neither $i \in an(j)$ nor $j \in an(i)$ holds, the theorem further distinguishes between the case $an(i)\cap an(j)=\emptyset$ and the case where $i$ and $j$ share a common ancestor. This additional distinction is not encoded in the partial order itself.
\end{remark}

\section{Identifiability of AIRs using  the causal tail coefficients}

Throughout this section, we assume that the standardized CTC matrix  $\boldsymbol{\Gamma}^* = (\Gamma^*_{ij})_{i,j=1}^d$ in  Definition \ref{Def:CTC} is given. We present a population-level algorithm in Algorithm \ref{alg:findF}  that shows the standardized AIR matrix $\widetilde{F} = (\widetilde{F}_{ij})_{i,j=1}^d$  in Definition \ref{Def:AIR} can be identified from the standardized CTC  matrix $\boldsymbol{\Gamma}^*$ in Definition \ref{Def:CTC}. The algorithm is similar in spirit  
 to  Algorithm 4.1 in \cite{gissibl2018tail}, where tail dependence coefficient instead of CTC is involved in that work.

% For an HT-HSCM over $d$ vertices, The following algorithm, directly identifies the standardized AIR matrix $\widetilde{F}$ in Definition \ref{Def:st AIR}    from $\boldsymbol{\Gamma}^*$.

 \begin{algorithm}[h]
 \caption{Find $\widetilde{F}$ from $\boldsymbol{\Gamma}^*$}
\label{alg:findF}
 \begin{algorithmic}[1]
 \For{$\nu = 0, \dots, d-1$}
     \For{each $j \in V$ with $|{an}(j)| = \nu$}
         \For{each $i \in V \setminus {De}(j)$}
             \State $\widetilde{F}_{ji} \gets 0$
         \EndFor
         \For{each $i \in {De}(j)$}
             \State $\widetilde{F}_{ji} \gets \Gamma^*_{ij} - \sum_{k \in {an}(j)} \widetilde{F}_{ki}$
         \EndFor
     \EndFor
 \EndFor
 \end{algorithmic}
 \end{algorithm}

Note that  Algorithm \ref{alg:findF} requires knowledge of the cardinalities $|an(j)|$ for all $j \in V$. The following lemma shows that the ancestor set of each node can be identified from the standardized CTCs $\Gamma^*_{ij}$. 

% We then use this characterization to prove the correctness of the algorithm.

\begin{lemma}\label{lem:ancestor_set_gamma}
For every $j \in V$,
\begin{equation}
an(j)= \{\, i \in V \setminus \{j\} : \Gamma^*_{ij}=1 \,\}.
\end{equation}
In particular,
\begin{equation}
|an(j)| = \sum_{i \in V \setminus \{j\}} \mathbf{1}\{\Gamma^*_{ij}=1\}.
\end{equation}
\end{lemma}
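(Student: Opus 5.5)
By Lemma~\ref{Lem:CTC expr}, for $i\neq j$,
\[
\Gamma^*_{ij}=\sum_{h\in An(i)\cap An(j)}\widetilde F_{hi}^{\alpha}.
\]
Here the roles follow the convention $\Gamma^*_{ji}=\sum_{h\in An(i)\cap An(j)}\widetilde F_{hj}^\alpha$, with $i$ and $j$ swapped. By Definition~\ref{Def:st AIR} the standardized AIRs satisfy $\sum_{h\in An(i)}\widetilde F_{hi}^\alpha=1$. Hence
\[
1-\Gamma^*_{ij}=\sum_{h\in An(i)\setminus An(j)}\widetilde F_{hi}^{\alpha}.
\]
The plan is to show that the right-hand side vanishes if and only if $i\in an(j)$. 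The cardinality formula then follows immediately by counting.

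\textbf{Step 1: positivity of the AIRs on ancestors.} Before the main argument, I will check that $F_{hi}>0$ for every $h\in An(i)$. This holds because $F_{An(i)}$ inherits Property~2 (vanishing only at the origin) from the local structural functions, and the unit vector $(\mathbf 1_{\{k=h\}})_{k\in An(i)}$ is nonzero. The denominator in Definition~\ref{Def:st AIR} is also positive, since it contains the term $F_{ii}>0$. Therefore $\widetilde F_{hi}>0$ for all $h\in An(i)$. I expect this inheritance check to be the main (mild) obstacle. If one prefers not to rely on the paper's blanket statement, I would argue by induction along a topological order. Feeding a unit impulse at $h$ makes $X_h=f_{pa(h)}(\mathbf 0,1)>0$, and positivity then propagates along any directed path $h\to\cdots\to i$. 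At each step the input vector has a nonzero coordinate, so Property~2 gives a strictly positive output.

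\textbf{Step 2: the two directions.} If $i\in an(j)$, then $An(i)\subseteq An(j)$ by transitivity of directed paths. So $An(i)\setminus An(j)=\emptyset$, and therefore $\Gamma^*_{ij}=1$. Conversely, suppose $i\neq j$ and $i\notin an(j)$. Then $i\notin An(j)$, while $i\in An(i)$, so $i\in An(i)\setminus An(j)$. By Step~1 this gives $1-\Gamma^*_{ij}\ge \widetilde F_{ii}^\alpha>0$, and hence $\Gamma^*_{ij}<1$.

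\textbf{Step 3: conclusion.} Combining both directions gives $an(j)=\{i\neq j:\Gamma^*_{ij}=1\}$. The formula for $|an(j)|$ is then just the cardinality of this set written as a sum of indicators.
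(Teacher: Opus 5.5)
Your proof is correct, but it takes a more direct route than the paper. The paper proves this lemma in one line by citing Theorem~\ref{Thm:CTC}: in that four-case classification, $\Gamma^*_{ij}=1$ occurs exactly when $i\in an(j)$. The proof of Theorem~\ref{Thm:CTC} is itself omitted and only referred to the linear case. You instead go straight to Lemma~\ref{Lem:CTC expr}. Using the normalization $\sum_{h\in An(i)}\widetilde F_{hi}^\alpha=1$, you write $1-\Gamma^*_{ij}=\sum_{h\in An(i)\setminus An(j)}\widetilde F_{hi}^\alpha$. The equivalence then reduces to two facts: transitivity ($An(i)\subseteq An(j)$ when $i\in an(j)$), and strict positivity $\widetilde F_{ii}>0$.

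Your version is self-contained. It makes explicit the one nontrivial ingredient, Property~2 of Definition~\ref{Def:HT-HSCM}, which the omitted proof of Theorem~\ref{Thm:CTC} also needs in order to get $\Gamma^*_{ij}<1$ when $i\notin an(j)$. The paper's route is shorter once the theorem is granted, and it delivers the full pairwise classification, including the distinction between cases (c) and (d), which this lemma does not need.

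One simplification: your argument only uses $F_{ii}>0$, and that is immediate. With a unit impulse at $i$, every proper ancestor $k$ of $i$ has zero ancestral noise vector, because $i\notin An(k)$ by acyclicity. So $X_k=0$, and $F_{ii}=f_{pa(i)}(\mathbf{0},1)>0$ by Property~2. Your propagation argument for general $h\in An(i)$ in Step~1 is correct but not needed.
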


\begin{proof}
By Theorem~\ref{Thm:CTC}, for any distinct $i,j \in V$, we have $\Gamma^*_{ij}=1$ if and only if $i \in an(j)$. The claimed set identity follows immediately, and the formula for $|an(j)|$ is then obtained by taking cardinalities.
\end{proof}

 Below, we formally establishes the correctness of Algorithm \ref{alg:findF}. 
\begin{proposition}\label{Pro:alg}
Algorithm \ref{alg:findF} correctly recovers the standardized   AIR matrix $\widetilde{F}$    from the standardized CTC  matrix $\boldsymbol{\Gamma}^*$.
\end{proposition}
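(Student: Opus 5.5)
The plan is to argue by strong induction on $\nu=|{an}(j)|$. The induction hypothesis is that, at the moment node $j$ is processed, every row $k$ with $|{an}(k)|<|{an}(j)|$ has already been set correctly. The step then reads off the row $j$ entries from Lemma~\ref{Lem:CTC expr}. The outer loop is well defined because Lemma~\ref{lem:ancestor_set_gamma} gives $|{an}(j)|$ directly from $\boldsymbol{\Gamma}^*$. Likewise, $De(j)=\{j\}\cup\{i\neq j:\Gamma^*_{ji}=1\}$ is available from $\boldsymbol{\Gamma}^*$ via Theorem~\ref{Thm:CTC}. The key structural fact is that if $k\in{an}(j)$ then ${an}(k)\subsetneq{an}(j)$. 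Indeed ${an}(k)\subseteq{an}(j)$ by transitivity, and $k\notin{an}(k)$ by acyclicity, so $|{an}(k)|<|{an}(j)|$. Hence every row $k$ that appears on the right-hand side of the update has been computed at an earlier stage.

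First I dispose of the entries with $i\notin De(j)$. Here $j\notin An(i)$, so $F_{ji}=0$ by Definition~\ref{Def:AIR}, and therefore $\widetilde F_{ji}=0$ by Definition~\ref{Def:st AIR}. This matches the first inner loop.

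Next take $i\in De(j)$ with $i\neq j$. Then $An(j)\subseteq An(i)$, so $An(i)\cap An(j)=An(j)$. Applying Lemma~\ref{Lem:CTC expr} with the roles of the indices swapped gives
\[
\Gamma^*_{ij}=\sum_{h\in An(j)}\widetilde F_{hi}^{\alpha}=\widetilde F_{ji}^{\alpha}+\sum_{k\in{an}(j)}\widetilde F_{ki}^{\alpha}.
\]
Solving for the $h=j$ term expresses it through $\Gamma^*_{ij}$ and already-recovered entries $\widetilde F_{ki}$ with $k\in{an}(j)$. This is the assignment in the algorithm.

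One point I would need to flag and handle in the write-up is the exponent. The identity above involves the $\alpha$-th powers $\widetilde F_{\cdot i}^{\alpha}$. So the recursion as written literally recovers the matrix of entries $\widetilde F_{ji}^{\alpha}$, and $\widetilde F_{ji}$ itself follows by taking the nonnegative $\alpha$-th root. Alternatively, the statement could be read with $\alpha$-powers in the update. I would present the proof for the powered quantities and note that this identifies $\widetilde F$, since the entries are nonnegative.

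Finally, there is the diagonal case $i=j$, which I expect to be the main subtle point, because Lemma~\ref{Lem:CTC expr} is stated only for distinct nodes. I would verify it directly. First, $\Gamma^*_{jj}=\lim_x\mathbb{E}[2G_j(X_j)-1\mid X_j>x]=1$, since $G_j(X_j)\to1$ on the event $\{X_j>x\}$ and $G_j$ is continuous, using the continuity of marginals assumed in Definition~\ref{Def:HT-HSCM}. Second, the column standardization in Definition~\ref{Def:st AIR} gives $\sum_{h\in An(j)}\widetilde F_{hj}^{\alpha}=1$. Together these show that the same identity $\Gamma^*_{jj}=\sum_{h\in An(j)}\widetilde F_{hj}^\alpha$ holds, so the update is also correct for $i=j$.

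The base case $\nu=0$ (source nodes, where ${an}(j)=\emptyset$ and the sum is empty) is covered by the same computation. This completes the induction and the proof.
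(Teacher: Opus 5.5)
Your proof is correct and follows the paper's argument: induction on $|an(j)|$, using Lemma~\ref{Lem:CTC expr} together with $An(i)\cap An(j)=An(j)$ for $i\in De(j)$, so that once the rows $k\in an(j)$ are known the only unknown term is $h=j$. Your two extra points are justified and both are gaps in the paper's own write-up. First, the paper's recursion silently drops the $\alpha$-powers that appear in Lemma~\ref{Lem:CTC expr}, so the algorithm actually returns $\widetilde F_{ji}^{\alpha}$ and $\widetilde F$ is obtained by taking nonnegative $\alpha$-th roots. Second, the paper applies that lemma on the diagonal $i=j$ although the lemma is stated only for distinct nodes; you correctly cover this case using $\Gamma^*_{jj}=1$ and the unit $\alpha$-norm of each column of $\widetilde F$.
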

\begin{proof} 
    We proceed to prove the correctness of this algorithm by induction on the size of the ancestor set, $|{an}(j)|$ which is determined by  Lemma~\ref{lem:ancestor_set_gamma}
    
    $\bullet$  Base case. 
    
    Suppose $j \in V$ such that $|{an}(j)| = 0$ (i.e., $j$ is a root node). For $i \in V \setminus {De}(j)$, we trivially have $\widetilde{F}_{ji} = 0$. For $i \in {De}(j)$, since $j$ is an ancestor of $i$ and has no ancestors itself, the intersection simplifies to ${An}(i) \cap {An}(j) = {An}(j) = \{j\}$. Therefore, evaluating the causal tail coefficient yields:
    \[
        \Gamma^*_{ij} = \sum_{h \in {An}(i) \cap {An}(j)} \widetilde{F}_{hi} = \widetilde{F}_{ji}.
    \]
    Thus, $\widetilde{F}_{ji}$ is fully identified for all root nodes.
    
     $\bullet$   Inductive step.
     
     Assume that for all nodes $j \in V$ with $|{an}(j)| \le \nu$, the entries $\widetilde{F}_{ji}$ have been correctly obtained. Consider a node $j' \in V$ with $|{an}(j')| = \nu + 1$. 
    
    For $i \in V \setminus {De}(j')$, it trivially holds that $\widetilde{F}_{j'i} = 0$. For $i \in {De}(j')$, the intersection of ancestors is ${An}(i) \cap {An}(j') = {An}(j')$. Expanding the causal tail coefficient gives:
    \[
        \Gamma^*_{ij} = \sum_{h \in {An}(i) \cap {An}(j')} \widetilde{F}_{hi} = \sum_{h \in {An}(j')} \widetilde{F}_{hi} = \widetilde{F}_{j'i} + \sum_{h \in {an}(j')} \widetilde{F}_{hi}.
    \]
    Observe that if $h \in {an}(j')$, then $h$ is a strict ancestor of $j'$, which  implies $|{an}(h)| \le \nu$ due to the acyclic nature of the graph. By our inductive hypothesis, the values $\widetilde{F}_{hi}$ in the summation are already known. Therefore, $\widetilde{F}_{j'i}$ is uniquely and deterministically identified. This concludes the proof.

\end{proof}

\section{Conclusion}

In this paper, we discussed causal discovery at the population level for structural causal models with heavy-tailed noise. Building on the causal tail coefficient framework of \cite{gnecco2021causal}, we extended these ideas from heavy-tailed linear models to a broader class of nonlinear systems through the introduction of the \emph{Heavy-Tailed Homogeneous Structural Causal Model} (HT-HSCM). This framework unifies several existing heavy-tailed causal models, including linear and max-linear models, under a common set of assumptions based on nonnegative, continuous, 1-homogeneous structural functions.

Our main results show that causal tail coefficients continue to encode fundamental structural information in this more general setting. In particular,  we presented a recursive procedure for recovering the standardized ancestral impulse-response matrix $\widetilde{F}$. These results provide a theoretical extension of the population-level identifiability results of \cite{gnecco2021causal} to a substantially richer class of heavy-tailed structural causal models.

Several important directions remain for future work. A natural next step is to utilize statistically consistent estimators of the standardized CTC matrix under finite-sample conditions, in the spirit of the nonparametric estimation approach proposed in \cite{gnecco2021causal} to show that the \textit{EASE} algorithm can be used to consistently recover causal orderings in the HT-HSCM setting.

 Another potential direction is to relax the nonnegativity constraint in HT-HSCM, thereby enabling the model to capture two-sided extremes.

\nopagebreak

\bibliographystyle{apalike}
\bibliography{References}

\newpage

\section*{Supplementary Material}

In the supplementary material, we present the proof of Lemma \ref{Lem:CTC expr}.

We start with the preparation of a few lemmas.
 
\begin{lemma}\label{Lem:uni RV}
    
A non-negative random variable $\epsilon$ is    regularly varying  with index $\alpha > 0$ if and only if   as $x \to \infty$,
$$
    \frac{\mathbb{P}(x^{-1}\epsilon \in \cdot)}{\mathbb{P}(\epsilon > x)} \xrightarrow{v} \nu_{\alpha}(\cdot)
$$
where $\nu_\alpha$ is a Borel measure on $(0,\infty)$ given by  $\nu_{\alpha}(A) = \int_A \alpha t^{-\alpha -1} \, dt$ for any Borel set $A \subset (0, \infty)$, and $\xrightarrow{v}$ denotes vague convergence with respect to the boundedness consisting of Borel subsets of $(0, \infty)$ that is each separated from the origin  (see \cite[Appendix B]{kulik2020heavy}).

\end{lemma}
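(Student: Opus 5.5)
We prove the two directions of Lemma~\ref{Lem:uni RV} separately, with the substitution $t = x^{-1}\epsilon$ as the basic device. Write $\bar H(x) = \mathbb{P}(\epsilon > x)$ and $\mu_x(\cdot) = \mathbb{P}(x^{-1}\epsilon \in \cdot)/\bar H(x)$, a measure on $(0,\infty)$. The boundedness here consists of sets bounded away from $0$, i.e.\ contained in some $(\delta,\infty)$. So vague convergence means convergence of $\int g\,d\mu_x$ for every bounded continuous $g$ whose support lies in some $(\delta,\infty)$, $\delta>0$.

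\emph{Regular variation implies vague convergence.} Fix $a>0$. Then
\[
\mu_x\bigl((a,\infty)\bigr) = \frac{\bar H(ax)}{\bar H(x)} \to a^{-\alpha} = \nu_\alpha\bigl((a,\infty)\bigr),
\]
because $\bar H \in RV_{-\alpha}$: the ratio $\ell(ax)/\ell(x)$ tends to $1$. Taking differences gives convergence on every interval $(a,b]$ with $0<a<b\le\infty$. These sets form a convergence-determining class for this boundedness, since $\nu_\alpha$ has no atoms and every such interval is a $\nu_\alpha$-continuity set. To pass to a test function $g$ supported in $(\delta,\infty)$, we first use $\sup_x \mu_x((\delta,\infty)) < \infty$ for large $x$, which follows from the displayed convergence. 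We then approximate $g$ uniformly on $[\delta, M]$ by step functions on intervals $(a,b]$. The tail piece on $(M,\infty)$ has mass $\mu_x((M,\infty)) \to M^{-\alpha}$, which is small for large $M$. Standard portmanteau arguments (e.g.\ \cite[Appendix B]{kulik2020heavy}) give the conclusion. We expect this step to be the main technical point: making the approximation rigorous requires local uniform boundedness plus uniform control of the tail mass, and both come from the convergence on sets $(M,\infty)$.

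\emph{Vague convergence implies regular variation.} For any $a>0$, the set $(a,\infty)$ is bounded away from the origin and $\nu_\alpha(\partial(a,\infty)) = \nu_\alpha(\{a\}) = 0$. The portmanteau theorem for vague convergence therefore gives $\bar H(ax)/\bar H(x) \to a^{-\alpha}$ for every $a>0$, so $\bar H \in RV_{-\alpha}$. Setting $\ell(x) = x^{\alpha}\bar H(x)$ gives $\ell(cx)/\ell(x) \to 1$, so $\ell \in RV_0$ and $\mathbb{P}(\epsilon>x) = \ell(x)x^{-\alpha}$. This is exactly \eqref{eq:RV tail}. The one subtlety is that $\bar H(x)>0$ for all large $x$, which is implicit in the normalization being well defined.
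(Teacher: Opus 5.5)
Your argument is correct, but it takes a different route from the paper. The paper gives no argument of its own. It cites equations (1.3.6)--(1.3.8) and Theorem 2.1.3(ii) of Kulik and Soulier, in effect treating the lemma as the univariate, non-negative special case of the general characterization of regular variation by vague convergence of $\mathbb{P}(x^{-1}\boldsymbol{X}\in\cdot)$ normalized by a tail probability.

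You instead prove both directions from the tail-ratio definition:
\begin{itemize}
\item For the converse, you apply the portmanteau theorem to the $\nu_\alpha$-continuity sets $(a,\infty)$.
\item For the forward direction, you approximate a test function uniformly by step functions on $(\delta,M]$. You use $\limsup_x \mu_x\bigl((\delta,\infty)\bigr)<\infty$ to control the approximation error. You control the remaining piece through $\mu_x\bigl((M,\infty)\bigr)\to M^{-\alpha}$.
\end{itemize}
These are the right ingredients. Test functions for this boundedness need only vanish near the origin and need not have compact support, so the tail control is genuinely needed, and you handle it correctly. Your version makes the lemma self-contained at the cost of a few lines. The citation buys brevity and places the statement inside the multivariate framework the paper uses in the next step (Lemma~\ref{Lem:iid tail}).

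One small point. Your sentence that the intervals $(a,b]$ form a convergence-determining class \emph{because} $\nu_\alpha$ has no atoms is not a justification on its own. A convergence-determining-class argument also needs closure under finite intersections and local boundedness. Your explicit step-function approximation already does that work, so you can drop the claim.
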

\begin{proof}
    See  equation (1.3.6), (1.3.7), (1.3.8) and Theorem 2.1.3 (ii) in \cite{kulik2020heavy}.
\end{proof}

    \begin{lemma}\label{Lem:iid tail}
    Let $\boldsymbol{\epsilon} = (\epsilon_1, \dots, \epsilon_d)^\top$ be a vector of i.i.d.\ positive regularly varying random variables as in Lemma \ref{Lem:uni RV}. Then, as $x \to \infty$,
\[
    \frac{\mathbb{P}(x^{-1}\boldsymbol{\epsilon} \in \cdot)}{\mathbb{P}(\epsilon_1 > x)} \xrightarrow{v} \nu_{\boldsymbol{\epsilon}}(\cdot) = \sum_{i=1}^d \delta_0 \times \dots \times \nu_{\alpha} \times \dots \times \delta_0 (\cdot),
\]
where the vague convergence $\xrightarrow{v}$ is with respect to the boundedness consisting of Borel subsets of $[0,\infty)^d\setminus\{\mathbf{0}\}$ that is each separated from the the origin $\mathbf{0}=(0,\ldots,0)\in \mathbb{R}^d$.
\end{lemma}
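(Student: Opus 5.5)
The plan is to prove the statement for a single test set and then pass to vague convergence through the product structure. By the standard criterion for vague convergence with respect to sets separated from the origin (see \cite[Appendix B]{kulik2020heavy}), it suffices to check convergence on a convergence-determining class. A convenient class is the complements of boxes,
\[
A_{\mathbf{y}}=\{\mathbf{z}\in[0,\infty)^d:\ z_k>y_k \text{ for some } k\},\qquad \mathbf{y}\in(0,\infty)^d,
\]
together with finite intersections of such sets, that is, sets of the form $\{z_k>y_k \ \forall k\in K\}$ for $K\neq\emptyset$. Alternatively, I would work with rectangles $\prod_k (a_k,b_k]$ that are bounded away from $\mathbf 0$. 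Under the limit measure these sets have $\nu_{\boldsymbol\epsilon}$-null boundaries, because $\nu_\alpha$ has no atoms and $\nu_{\boldsymbol\epsilon}$ lives on the coordinate axes.

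\textbf{Step 1: sets requiring one large coordinate.} For $K=\{k\}$, independence and Lemma \ref{Lem:uni RV} give
\[
\frac{\mathbb P(\epsilon_k>xy_k)}{\mathbb P(\epsilon_1>x)}\to y_k^{-\alpha}=\nu_\alpha((y_k,\infty)).
\]
This matches $\nu_{\boldsymbol\epsilon}$, since only the $k$-th summand charges the set. The same computation with general intervals $(a,b]$ in the $k$-th coordinate, with the other coordinates left free, shows that the marginal parts converge to $\nu_\alpha$.

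\textbf{Step 2: sets requiring two or more large coordinates.} For $|K|\ge2$, independence gives
\[
\frac{\prod_{k\in K}\mathbb P(\epsilon_k>xy_k)}{\mathbb P(\epsilon_1>x)}\sim \Big(\prod_{k\in K} y_k^{-\alpha}\Big)\,\mathbb P(\epsilon_1>x)^{|K|-1}\to0.
\]
This agrees with $\nu_{\boldsymbol\epsilon}$, which puts no mass off the axes. Inclusion–exclusion then yields
\[
\frac{\mathbb P(x^{-1}\boldsymbol\epsilon\in A_{\mathbf y})}{\mathbb P(\epsilon_1>x)}\to\sum_k y_k^{-\alpha}=\nu_{\boldsymbol\epsilon}(A_{\mathbf y}).
\]
For a general rectangle separated from the origin, at least one coordinate interval $(a_k,b_k]$ has $a_k>0$. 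If two such coordinates exist, the ratio tends to $0$ by the computation above. If exactly one exists, the other coordinates range over intervals containing $0$. Their probabilities $\mathbb P(x^{-1}\epsilon_l\in(a_l,b_l])$ then tend to $1=\delta_0$ of the interval, because $x^{-1}\epsilon_l\to0$ in probability. The limit is therefore $\nu_\alpha((a_k,b_k])\prod_{l\ne k}\delta_0(\cdot)$, as required.

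\textbf{Step 3: main obstacle.} The main difficulty is not the computations but justifying that these sets determine vague convergence in the stated boundedness. I would invoke the criterion in \cite[Appendix B]{kulik2020heavy}: a $\pi$-system of $\nu$-continuity sets that generates the topology and contains neighborhoods that exhaust each bounded set. Alternatively, one can use the portmanteau characterization with continuous functions whose support is separated from $\mathbf 0$. In that version, the contribution from two coordinates both exceeding $\delta x$ is negligible, and on the complementary event exactly one coordinate is large while the others are $o_P(x)$. Uniform continuity of the test function then reduces the problem to the one-dimensional statement in Lemma \ref{Lem:uni RV}.
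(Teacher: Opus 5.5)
Your argument is correct in substance, but it takes a different route from the paper. The paper gives no argument of its own and simply cites Proposition 2.1.8 of \cite{kulik2020heavy}; you reprove that result starting from Lemma \ref{Lem:uni RV}.

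Your computations check out. The orthants $\{z_k>y_k,\ k\in K\}$, with $K\neq\emptyset$ and $y_k>0$, form a $\pi$-system of $\nu_{\boldsymbol\epsilon}$-continuity sets. The case $|K|=1$ is the one-dimensional statement, and the case $|K|\ge2$ has numerator $O(\mathbb{P}(\epsilon_1>x)^{|K|})$ by independence. Inclusion--exclusion then passes the limits to $A_{\mathbf y}$ and to the ring generated by the orthants.

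The test-function variant also closes. Take $f$ vanishing on $\{|\mathbf z|_\infty\le r\}$ and $\delta<r$. On the one-big-coordinate event, $|f(x^{-1}\boldsymbol\epsilon)-f(x^{-1}\epsilon_i\mathbf e_i)|$ is zero unless $\epsilon_i>rx$. When it is nonzero, it is bounded by the modulus of continuity of $f$ at $\delta$, so its contribution is at most that modulus times $r^{-\alpha}$ in the limit. Use Lipschitz test functions, or truncate at $\epsilon_i\le Mx$, because a bounded continuous $f$ with unbounded support need not be uniformly continuous.

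Your route makes the single-big-jump mechanism explicit: negligible joint exceedances put all the mass on the axes, and the $o_P(x)$ non-extreme coordinates produce the $\delta_0$ factors. This is exactly what the later supplementary lemmas exploit. The citation buys brevity and absorbs the convergence-determining-class bookkeeping you flag in Step 3.

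Two small fixes are needed.
\begin{itemize}
\item Finite intersections of the sets $A_{\mathbf y}$ are not orthants. The $\pi$-system is the orthant family itself.
\item Not every rectangle separated from $\mathbf 0$ is a continuity set. For $d=2$, the boundary of $(1,2]\times(0,1]$ contains $(1,2]\times\{0\}$, which has $\nu_{\boldsymbol\epsilon}$-mass $1-2^{-\alpha}$. Indeed the ratio tends to $(1-2^{-\alpha})\,\mathbb{P}(\epsilon_2>0)>0$, while $\nu_{\boldsymbol\epsilon}$ of the set is $0$. So in your rectangle class, the non-extreme coordinates must range over intervals $[0,b]$ with $b>0$, not intervals having $0$ as an excluded endpoint. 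Your case analysis (``intervals containing $0$'') misses exactly this case.
\end{itemize}
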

\begin{proof}
    See Proposition 2.1.8 in \cite{kulik2020heavy}.
\end{proof}
 
\begin{lemma}\label{Lem:HTHSCM tail}
Consider a HT-HSCM $\boldsymbol{X}=\{X_1,\cdots,X_d\}$ as in Definition \ref{Def:HT-HSCM}.
  Then,
$$
    \lim_{x \to \infty} \frac{\mathbb{P}(X_1 > x)}{\mathbb{P}(\epsilon_1 > x)} =   \lim_{x \to \infty} \frac{\mathbb{P}\left( \bigvee_{h \in An(1)} F_{h1} \epsilon_h > x \right)}{\mathbb{P}(\epsilon_1 > x)} = \sum_{h \in An(1)} F_{h1}^\alpha,
$$
where $F_{h1}$'s are AIRs in Definition \ref{Def:AIR}.
\end{lemma}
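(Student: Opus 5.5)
We prove Lemma \ref{Lem:HTHSCM tail} with the continuous mapping theorem for vague convergence (Lemma \ref{Lem:iid tail}), applied to the ancestral aggregation map $F := F_{An(1)}$ of \eqref{eq:anc agg}. Write $\boldsymbol{\epsilon}=\boldsymbol{\epsilon}_{An(1)}\in\mathbb{R}_+^{m}$ with $m=|An(1)|$. By Properties 1--4 inherited by $F$, $\{X_1>x\}=\{x^{-1}\boldsymbol{\epsilon}\in A\}$, where
\[
A=\{\mathbf{y}\in[0,\infty)^m : F(\mathbf{y})>1\}.
\]
We then establish three facts about $A$.
\begin{enumerate}
\item $A$ is separated from the origin. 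Continuity and homogeneity give $F(\mathbf{y})\le M\|\mathbf{y}\|$ with $M=\max_{\|\mathbf{u}\|=1}F(\mathbf{u})<\infty$. Hence $F(\mathbf{y})>1$ forces $\|\mathbf{y}\|>1/M$.
\item $A$ is open, since $F$ is continuous.
\item The boundary $\partial A\subset\{F=1\}$ is a null set for $\nu_{\boldsymbol{\epsilon}}$.
\end{enumerate}

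For the third fact we compute $\nu_{\boldsymbol{\epsilon}}$ explicitly. It is supported on the coordinate axes. On the $h$-th axis, $F(t\mathbf{e}_h)=tF_{h1}$ by homogeneity and Definition \ref{Def:AIR}. Here $F_{h1}>0$ because $F$ vanishes only at the origin (Property 2). Therefore
\[
\nu_{\boldsymbol{\epsilon}}(A)=\sum_{h\in An(1)}\nu_\alpha\bigl((1/F_{h1},\infty)\bigr)=\sum_{h\in An(1)}F_{h1}^\alpha,
\]
and $\nu_{\boldsymbol{\epsilon}}(\{F=1\})=\sum_h\nu_\alpha(\{1/F_{h1}\})=0$ because $\nu_\alpha$ has a density. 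The portmanteau theorem for vague convergence with respect to this boundedness (\cite[Appendix B]{kulik2020heavy}) then gives
\[
\frac{\mathbb{P}(X_1>x)}{\mathbb{P}(\epsilon_1>x)}\to\nu_{\boldsymbol{\epsilon}}(A)=\sum_{h\in An(1)}F_{h1}^\alpha .
\]

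The middle expression is handled the same way, with the map $G(\mathbf{y})=\bigvee_{h}F_{h1}y_h$ in place of $F$. The map $G$ is continuous and 1-homogeneous, and it agrees with $F$ on every axis. The set $\{G>1\}$ is open and separated from the origin, its boundary is $\nu_{\boldsymbol{\epsilon}}$-null, and its $\nu_{\boldsymbol{\epsilon}}$-mass is the same sum. Alternatively, one can use the elementary union-bound and independence argument for maxima of independent regularly varying variables.

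The main obstacle is the careful justification of the portmanteau step: checking that $A$ belongs to the boundedness (separation from $\mathbf{0}$) and that its boundary is null. The compactness bound $F\le M\|\cdot\|$ and the axis-concentration of $\nu_{\boldsymbol{\epsilon}}$ settle this. Monotonicity (Property 5) is not needed here. Property 2 is what guarantees $F_{h1}>0$, so that every ancestor contributes a strictly positive term.
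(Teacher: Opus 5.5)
Your proposal is correct and follows essentially the paper's route: both write $\{X_1>x\}$ as $\{x^{-1}\boldsymbol{\epsilon}\in F^{-1}(1,\infty)\}$ by homogeneity, place the boundary inside $F^{-1}\{1\}$, which is null for the axis-concentrated limit measure, and then apply the vague convergence of Lemma \ref{Lem:iid tail} to get $\sum_{h} F_{h1}^\alpha$. You additionally check that this set is separated from the origin, which the paper leaves implicit, and your map $G$ is an alternative to the paper's inclusion--exclusion treatment of the middle term; your fallback union-bound argument is the paper's own.
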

\begin{proof}
   Our proof is inspired from Proposition 1 in \cite{fang2025structural}.
    
    By the 1-homogeneity of $F_{{An}(1)}$ in Definition \ref{eq:anc agg},  we have
    \[
        \mathbb{P}(X_1 > x) = \mathbb{P}\left(F_{{An}(1)}(\boldsymbol{\epsilon}_{An(1)}) > x\right) = \mathbb{P}\left(x^{-1}\boldsymbol{\epsilon}_{An(1)} \in F_{{An}(1)}^{-1} (1, \infty)\right).
    \]
    
    Without loss of generality, assume $\boldsymbol{\epsilon}_{An(1)} = (\epsilon_1, \dots, \epsilon_{|An(1)|})^\top$. Recall that for a continuous map, the boundary of the preimage of a set is contained in the preimage of its boundary. Hence,
\[
\partial F_{{An}(1)}^{-1}(1,\infty)
\subseteq
F_{{An}(1)}^{-1}\partial(1,\infty)
=
F_{{An}(1)}^{-1}\{1\}.
\] 
Evaluating the limit measure $ \nu_{\boldsymbol{\epsilon}_{An(1)}}= \sum_{i=1}^{|An(1)|} \delta_0 \times \dots \times \nu_{\alpha} \times \dots \times \delta_0  $ in Lemma \ref{Lem:iid tail} on this set  and applying Definition \ref{Def:AIR}  yields
    \[
    \nu_{\boldsymbol{\epsilon}_{An(1)}} \left( F_{{An}(1)}^{-1}  \{1\}   \right)    = \sum_{i=1}^{|An(1)|} \int_0^\infty \mathbf{1}_{\{tF_{i1}=1\}} \alpha t^{-\alpha-1} \, dt = 0.
    \]
     Thus, $\nu_{\boldsymbol{\epsilon}_{An(1)}}(\partial F_{{An}(1)}^{-1}((1, \infty))) = 0$. By the vague convergence established in Lemma \ref{Lem:iid tail}, we have as $x \to \infty$,
    \[
        \frac{\mathbb{P}(X_1 > x)}{\mathbb{P}(\epsilon_1 > x)} \to \nu_{\boldsymbol{\epsilon}_{An(1)}}\left(F_{{An}(1)}^{-1}(1, \infty)\right) = \sum_{i=1}^{|An(1)|} \int_0^\infty \mathbf{1}_{\{tF_{i1}>1\}} \alpha t^{-\alpha-1} \, dt = \sum_{i=1}^{|An(1)|} F_{i1}^\alpha.
    \]
 
    Furthermore,   note that by the inclusion-exclusion principle:
    \begin{align*}
        \mathbb{P}\left(\bigvee_{i=1}^{|An(1)|} F_{i1} \epsilon_i > x\right) &= \sum_{i=1}^{|An(1)|} \mathbb{P}(F_{i1} \epsilon_i > x) \\
        &\quad + \sum_{k \ge 2} \sum_{1 \le i_1 < i_2 < \dots < i_k \le |An(1)|} (-1)^{k+1} \mathbb{P}\left( \bigcap_{j=1}^k \{F_{i_j} \epsilon_{i_j} > x\} \right).
    \end{align*}
    Since the $\epsilon_{i}$'s are independent regularly varying random variables, $\mathbb{P}\left(\bigcap_{j=1}^k \{F_{i_j} \epsilon_{i_j} > x\}\right) = o(\mathbb{P}(\epsilon_1 > x))$ for $k \ge 2$. Finally, by regular variation,
    \[
        \frac{\mathbb{P}(F_{i1}\epsilon_i > x)}{\mathbb{P}(\epsilon_1 > x)} \to F_{i1}^\alpha
    \]
    for $1 \le i \le |An(1)|$. This matches the limit above and concludes our proof.
\end{proof}

    \begin{lemma}\label{Lem:exc and non-exc neg}
    Consider a HT-HSCM $\boldsymbol{X}=(X_i)_{i=1}^d$ as in Definition \ref{Def:HT-HSCM}.
Then, as $x \to \infty$,
\[
    \mathbb{P}\left(X_i > x, \bigvee_{h \in An(i)} F_{hi} \epsilon_h \le x\right) = o(\mathbb{P}(\epsilon_i > x)).
\] for all $i =1, \cdots ,d $
\end{lemma}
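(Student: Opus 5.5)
Since $\{X_i>x\}\supseteq\{\bigvee_{h\in An(i)}F_{hi}\epsilon_h>x\}$ is not guaranteed pointwise, the cleanest route is not to compare the two events pathwise but to use the difference of probabilities together with the tail asymptotics already in Lemma~\ref{Lem:HTHSCM tail}. Write $M_i=\bigvee_{h\in An(i)}F_{hi}\epsilon_h$. Then
\[
\mathbb{P}(X_i>x,\,M_i\le x)=\mathbb{P}(X_i>x)-\mathbb{P}(X_i>x,\,M_i>x).
\]
By Lemma~\ref{Lem:HTHSCM tail} (applied to node $i$ in place of node $1$, which is only a relabeling), $\mathbb{P}(X_i>x)/\mathbb{P}(\epsilon_1>x)\to\sum_{h\in An(i)}F_{hi}^\alpha$. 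So the claim reduces to showing
\[
\liminf_{x\to\infty}\frac{\mathbb{P}(X_i>x,\,M_i>x)}{\mathbb{P}(\epsilon_1>x)}\ge \sum_{h\in An(i)}F_{hi}^\alpha,
\]
and since $\mathbb{P}(\epsilon_i>x)=\mathbb{P}(\epsilon_1>x)$ by the i.i.d. assumption, this gives the $o(\cdot)$ statement.

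For the joint probability I will use the vague convergence of Lemma~\ref{Lem:iid tail} applied to $\boldsymbol{\epsilon}_{An(i)}$. Using 1-homogeneity of $F_{An(i)}$ and of the map $\mathbf{z}\mapsto\bigvee_h F_{hi}z_h$, the event $\{X_i>x,M_i>x\}$ equals $\{x^{-1}\boldsymbol{\epsilon}_{An(i)}\in A\}$ with
\[
A=F_{An(i)}^{-1}(1,\infty)\cap\Bigl\{\mathbf{z}:\textstyle\bigvee_h F_{hi}z_h>1\Bigr\}.
\]
$A$ is open, since both maps are continuous. It is also bounded away from the origin, since $\bigvee_h F_{hi}z_h>1$ forces some $z_h>1/\max_h F_{hi}$, and $F_{hi}>0$ for $h\in An(i)$ by property 2 (vanishing only at $0$). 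The portmanteau lower bound for open sets under vague convergence then gives
\[
\liminf_{x\to\infty}\frac{\mathbb{P}(x^{-1}\boldsymbol{\epsilon}_{An(i)}\in A)}{\mathbb{P}(\epsilon_1>x)}\ge\nu_{\boldsymbol{\epsilon}_{An(i)}}(A).
\]
This avoids having to check boundary null sets. Alternatively, one can repeat the boundary argument of Lemma~\ref{Lem:HTHSCM tail}: $\partial A$ is contained in the union of the two level sets $\{F_{An(i)}=1\}$ and $\{\bigvee_h F_{hi}z_h=1\}$, both of which are $\nu$-null on the axes, and this gives an actual limit.

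The remaining computation uses the fact that the limit measure lives on the coordinate axes. On the $h$-th axis, $\mathbf{z}=t\mathbf{e}_h$, so $F_{An(i)}(t\mathbf{e}_h)=tF_{hi}$ by homogeneity and Definition~\ref{Def:AIR}, and also $\bigvee_k F_{ki}z_k=tF_{hi}$. The two conditions defining $A$ therefore coincide on each axis, and
\[
\nu_{\boldsymbol{\epsilon}_{An(i)}}(A)=\sum_{h\in An(i)}\int_0^\infty \mathbf{1}_{\{tF_{hi}>1\}}\alpha t^{-\alpha-1}\,dt=\sum_{h\in An(i)}F_{hi}^\alpha.
\]
Combining this with the first display yields $\limsup \mathbb{P}(X_i>x,M_i\le x)/\mathbb{P}(\epsilon_i>x)\le 0$, which proves the lemma.

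The main obstacle is the justification that $A$ is a legitimate set for the vague convergence, namely that it is open (or $\nu$-continuous) and separated from the origin. Both properties follow from continuity and from the strict positivity of the AIRs on $An(i)$. The single-big-jump content is carried entirely by the axis-concentration of $\nu_{\boldsymbol{\epsilon}}$ in Lemma~\ref{Lem:iid tail}.
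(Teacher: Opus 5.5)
Your proof is correct, but it reaches the key step by a different route from the paper. The paper applies \eqref{eq:monotone imp} to the aggregated map $F_{An(i)}$ to get the pathwise inclusion \eqref{eq:single exc imp exc}. Hence the probability in question is exactly $\mathbb{P}(X_i>x)-\mathbb{P}(M_i>x)$, with your $M_i=\bigvee_{h\in An(i)}F_{hi}\epsilon_h$. Both terms then have the same asymptotics by Lemma~\ref{Lem:HTHSCM tail}, and no new limit computation is needed. You instead subtract the joint probability $\mathbb{P}(X_i>x,M_i>x)$ and bound it from below by the portmanteau inequality on the open, origin-separated set $A$. The key point you use is that $\nu_{\boldsymbol{\epsilon}}$ charges only the axes, where $F_{An(i)}$ and $\mathbf{z}\mapsto\bigvee_h F_{hi}z_h$ coincide.

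The paper's route is shorter and gives an identity at each fixed $x$. However, it rests on the claim that $F_{An(i)}$ inherits Property~5 from the local functions, and that claim can fail when some $f_{pa(i)}$ is not coordinatewise monotone. For instance, take the graph $1\to2\to4$, $1\to3\to4$ with $X_1=\epsilon_1$, $X_2=X_1+\epsilon_2$, $X_3=\tfrac{1}{2}X_1+\epsilon_3$ and $X_4=g(X_2,X_3)+\epsilon_4$, where $g(y_1,y_2)=\max(y_1,y_2)+y_1y_2|y_1-y_2|/(y_1+y_2)^2$. Every local function satisfies Properties 1--5, and $F_{14}=g(1,\tfrac{1}{2})=\tfrac{10}{9}$. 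Yet $F_{An(4)}(t,0,\tfrac{t}{2},0)=g(t,t)=t<F_{14}t$, so the inclusion fails.

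Your argument needs only continuity and 1-homogeneity of $F_{An(i)}$, which are preserved under composition. So your caution about the pointwise inclusion is justified, and your proof establishes the lemma under the stated assumptions. Combined with the second limit in Lemma~\ref{Lem:HTHSCM tail}, the same computation also gives $\mathbb{P}(M_i>x,\,X_i\le x)=o(\mathbb{P}(\epsilon_1>x))$. That is what is needed to replace the inclusion where it is used to split $\mathbf{1}_{\{X_1>x\}}$ in the proof of Lemma~\ref{Lem:CTC expr}.
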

\begin{proof}
   Without loss  of generality assume $i=1$. We first note that relation \eqref{eq:monotone imp} and Definition \ref{Def:AIR} imply
    \begin{equation}\label{eq:single exc imp exc}
        \left\{ \bigvee_{h \in An(1)} F_{h1} \epsilon_h > x \right\} \subseteq \{X_1 > x\}.
    \end{equation}
    Therefore, we can decompose the joint probability as follows:
    \begin{align*}
        \mathbb{P}\left( X_1 > x, \bigvee_{h \in An(1)}F_{h1} \epsilon_h \le x \right) &= \mathbb{P}(X_1 > x) - \mathbb{P}\left( X_1 > x, \bigvee_{h \in An(1)}F_{h1} \epsilon_h > x \right) \\
        &= \mathbb{P}(X_1 > x) - \mathbb{P}\left( \bigvee_{h \in An(1)}F_{h1} \epsilon_h > x \right).
    \end{align*}
    Dividing the above relation by $\mathbb{P}(\epsilon_1 > x)$, we get:
    \[
        \frac{\mathbb{P}\left( X_1 > x, \bigvee_{h \in An(1)}F_{h1} \epsilon_h \le x \right)}{\mathbb{P}(\epsilon_1 > x)} = \frac{\mathbb{P}(X_1 > x)}{\mathbb{P}(\epsilon_1 > x)} - \frac{\mathbb{P}\left( \bigvee_{h \in An(1)}F_{h1} \epsilon_h > x \right)}{\mathbb{P}(\epsilon_1 > x)}.
    \]
    By Lemma \ref{Lem:HTHSCM tail}, both fractions on the right-hand side converge to the same limit as $x \to \infty$. Thus, their difference converges to $0$, which by definition means the joint probability is $o(\mathbb{P}(\epsilon_1 > x))$.
\end{proof}

We are now ready to prove Lemma \ref{Lem:CTC expr}.
\begin{proof}[Proof of Lemma \ref{Lem:CTC expr}]
   Our strategy of proof is similar to that of the proof of Lemma 1 in \cite{gnecco2021causal}. We begin by noting the conditional expectation can be written as:
    \[
        \mathbb{E}[G_2(X_2) \mid X_1 > x] = \frac{\mathbb{E}[G_2(X_2) \mathbf{1}_{\{X_1 > x\}}]}{\mathbb{P}(X_1 > x)}
    \]
    In view of the relation \eqref{eq:single exc imp exc}, we can decompose the indicator function as:
    \[
        \mathbf{1}_{\{X_1 > x\}} = \mathbf{1}_{\left\{ \bigvee_{h \in An(1)} F_{h1} \epsilon_h > x \right\}} + \mathbf{1}_{\left\{ X_1 > x, \, \bigvee_{h \in An(1)} F_{h1} \epsilon_h \le x \right\}}.
    \]
    Since $G_2$ is bounded between $0$ and $1$, we  have by Lemma \ref{Lem:exc and non-exc neg} that
    \[
        0 \le \mathbb{E}\left[ G_2(X_2) \mathbf{1}_{\left\{ X_1 > x, \, \bigvee_{h \in An(1)} F_{h1} \epsilon_h \le x \right\}} \right] \le \mathbb{P}\left( X_1 > x, \bigvee_{h \in An(1)} F_{h1} \epsilon_h \le x \right) = o(\mathbb{P}(\epsilon_1 > x)) \text{ as }x\rightarrow\infty.
    \]
    Therefore,
    \[
        \mathbb{E}[G_2(X_2) \mathbf{1}_{\{X_1 > x\}}] = \mathbb{E}\left[ G_2(X_2) \mathbf{1}_{\left\{ \bigvee_{h \in An(1)} F_{h1} \epsilon_h > x \right\}} \right] + o(\mathbb{P}(\epsilon_1 > x)).
    \]
    Now for the first term above, applying the inclusion-exclusion principle yields:
\begin{align*}
    \mathbb{E}\left[ G_2(X_2) \mathbf{1}_{\left\{ \bigvee_{h \in An(1)} F_{h1} \epsilon_h > x \right\}} \right] &= \sum_{h \in An(1)} \mathbb{E}[G_2(X_2) \mathbf{1}_{\{F_{h1} \epsilon_h > x\}}] \\
    &\quad + \sum_{k \ge 2} \sum_{1 \le h_1 < \dots < h_k} (-1)^{k+1} \mathbb{E}\left[ G_2(X_2) \mathbf{1}_{\left\{ \bigcap_{j=1}^k F_{h_j} \epsilon_{h_j} > x \right\}} \right].
\end{align*}
    Again  since $G_2$ is bounded by $1$,  and  $\epsilon_i$'s   are independent regularly varying random variables, the second summation  is of order $o(\mathbb{P}(\epsilon_1 > x))$ as $x\rightarrow\infty$. We now split the first summation above  into:
    \begin{equation}\label{eq:E[G_2 1] key relation}
        \sum_{h \in An(1)} \mathbb{E}[G_2(X_2) \mathbf{1}_{\{F_{h1} \epsilon_h > x\}}] = \sum_{h \in An(1) \cap An(2)} \mathbb{E}[G_2(X_2) \mathbf{1}_{\{F_{h1} \epsilon_h > x\}}] + \sum_{h \in An(1) \setminus An(2)} \mathbb{E}[G_2(X_2) \mathbf{1}_{\{F_{h1} \epsilon_h > x\}}].
    \end{equation}
    
    Note that for $h \in An(1) \setminus An(2)$, we have $\epsilon_h \perp X_2$, and that $\mathbb{E}[G_2(X_2)] = 1/2$ since $G_2$ has been assumed to be continuous and thus $G_2(X_2)$ follows a uniform distribution on $[0,1]$. Therefore,
    \[
        \mathbb{E}[G_2(X_2) \mathbf{1}_{\{F_{h1} \epsilon_h > x\}}] = \frac{1}{2} \mathbb{P}(F_{h1} \epsilon_h > x).
    \]
    So dividing   this by $\mathbb{P}(X_1 > x)$, and applying regular variation of $\varepsilon_h$ and Lemma \ref{Lem:HTHSCM tail}, we get as $x\rightarrow\infty$:
    \begin{equation}\label{eq:E[G_2 1] key relation 1}
        \frac{\mathbb{E}[G_2(X_2) \mathbf{1}_{\{F_{h1} \epsilon_h > x\}}]}{\mathbb{P}(X_1 > x)}  =  \frac{1}{2}\frac{ \mathbb{P}(F_{h1} \epsilon_h > x)] / \mathbb{P}(\epsilon_1 > x)}{\mathbb{P}(X_1 > x) / \mathbb{P}(\epsilon_1 > x)} \to \frac{1}{2} \frac{F_{h1}^\alpha}{\sum_{k \in An(1)} F_{k1}^\alpha}.
    \end{equation}

    Now we turn to the first summation in \eqref{eq:E[G_2 1] key relation}. For $h \in An(1) \cap An(2)$, the relation \eqref{eq:monotone imp} guarantees that $X_2 \ge F_{h2} \epsilon_h$. Given $F_{h1} \epsilon_h > x$, it follows that $X_2 > F_{h2} \frac{x}{F_{h1}}$. Because $G_2$ is a monotonically increasing CDF that is bounded by $1$, we have for all $x\ge 0$ that
    \[
        G_2\left( F_{h2} \frac{x}{F_{h1}} \right) \mathbb{P}(F_{h1} \epsilon_h > x) \le \mathbb{E}[G_2(X_2) \mathbf{1}_{\{F_{h1} \epsilon_h > x\}}] \le \mathbb{P}(F_{h1} \epsilon_h > x).
    \]
    Dividing the  relations above by $\mathbb{P}(X_1 > x)$ and taking the limit as $x \to \infty$,  applying also the fact that $G_2(c) \to 1$ as $c \to \infty$ and Lemma \ref{Lem:HTHSCM tail}, we then have  as $x\rightarrow\infty$,
    \begin{equation}\label{eq:E[G_2 1] key relation 2}
        \frac{\mathbb{E}[G_2(X_2) \mathbf{1}_{\{F_{h1} \epsilon_h > x\}}]}{\mathbb{P}(X_1 > x)} \to \frac{F_{h1}^\alpha}{\sum_{k \in An(1)} F_{k1}^\alpha}.
    \end{equation}

Combining \eqref{eq:E[G_2 1] key relation}, \eqref{eq:E[G_2 1] key relation 1} and \eqref{eq:E[G_2 1] key relation 2}, we have
\begin{align*}
    \lim_{x \to \infty} \mathbb{E}[G_2(X_2) \mid X_1 > x] &= \sum_{h \in An(1) \cap An(2)} \frac{F_{h1}^\alpha}{\sum_{k \in An(1)} F_{k1}^\alpha} + \sum_{h \in An(1) \setminus An(2)} \frac{1}{2} \frac{F_{h1}^\alpha}{\sum_{k \in An(1)} F_{k1}^\alpha} \\
    &= \frac{1}{2} + \frac{1}{2} \frac{\sum_{h \in An(1) \cap An(2)} F_{h1}^\alpha}{\sum_{h \in An(1)} F_{h1}^\alpha}= \frac{1}{2}+\frac{1}{2}\sum_{h \in An(1) \cap An(2)}\widetilde{F}_{h1}.
\end{align*}
Therefore,  \[
\Gamma^*_{12}= \lim_{x \to \infty} \mathbb{E}(2G_2(X_2))-1 | X_1 >x) = \sum_{h \in An(1) \cap An(2)}\widetilde{F}_{h1}
\]
This concludes the proof.
\end{proof}

\end{document}